\theoremstyle{plain}
\newtheorem{theo}{Theorem}
\newtheorem{lem}[theo]{Lemma}
\newtheorem{cor}[theo]{Corollary}
\newtheorem{prop}[theo]{Proposition}
\theoremstyle{definition}
\theoremstyle{definition}
\newcommand{\N}{\ensuremath{\mathbb{N}}}
\newcommand{\F}{\ensuremath{\mathbb{F}}}
\newcommand{\Z}{\ensuremath{\mathbb{Z}}}
\newcommand{\sm}{\ensuremath{\smallsetminus}}
\newcommand{\sub}{\subseteq}
\newcommand{\COMMENT}[1]{}
\newcommand{\BF}{\ensuremath{\mathcal B}}
\newcommand{\CFtop}{\ensuremath{\mathcal C}_{\rm top}}
\newcommand{\CF}{\ensuremath{\mathcal C}}
\newcommand{\EF}{\ensuremath{\mathcal E}}
\newcommand{\FF}{\ensuremath{\mathcal F}}
\newcommand{\PF}{\ensuremath{\mathcal P}}
\newcommand{\RF}{\ensuremath{\mathcal R}}
\newenvironment{txteq}
  {
    \begin{equation}
    \begin{minipage}[c]{0.85\textwidth} 
    \em                                
  }
  {\end{minipage}\end{equation}\ignorespacesafterend}
\newenvironment{txteq*}
  {
    \begin{equation*}
    \begin{minipage}[c]{0.85\textwidth} 
    \em                                
  }
  {\end{minipage}\end{equation*}\ignorespacesafterend}
\DeclareMathOperator{\fin}{fin}
\DeclareMathOperator{\sk}{skew}
\DeclareMathOperator{\alg}{alg}
\DeclareMathOperator{\coloneqq}{\mathrel{\mathop:}=}
\title{Orthogonality and minimality\\ in the homology of locally finite graphs}
\author{Reinhard Diestel \and Julian  Pott}
\begin{document}
\maketitle
\begin{abstract}
Given a finite set $E$, a subset $D\sub E$ (viewed as a function $E\to \F_2$) is orthogonal to a given subspace $\FF$ of the $\F_2$-vector space of functions $E\to \F_2$ as soon as $D$ is orthogonal to every $\sub$-minimal element of~$\FF$.
This fails in general when $E$ is infinite.

However, we prove the above statement for the six subspaces $\FF$ of the edge space of any $3$-connected locally finite graph that are relevant to its homology: the topological, algebraic, and finite cycle and cut spaces. This solves a problem of~\cite{RDsBanffSurvey}.
\end{abstract}

\section{Introduction}%
   \COMMENT{}
Let $G$ be a $2$-connected locally finite graph, and let $\EF=\EF(G)$ be its edge space over $\F_2$.
We think of the elements of $\EF$ as sets of edges, possibly infinite.
Two sets of edges are \emph{orthogonal} if their intersection has (finite and) even cardinality.
A set $D\in\EF$ is \emph{orthogonal} to a subspace $\FF\sub \EF$ if it is orthogonal to every $F\in\FF$.
See \cite{DiestelBook10noEE, RDsBanffSurvey} for any definitions not given below.

The topological \emph{cycle space} $\CFtop(G)$ of~$G$ is the subspace of $\EF(G)$ generated (via thin sums, possibly infinite) by the \emph{circuits} of~$G$, the edge sets of the topological circles in the Freudenthal compactification $|G|$ of~$G$.
This space $\CFtop(G)$ contains precisely the elements of~$\EF$ that are orthogonal to $\BF_{\fin}(G)$, the finite-cut space of~$G$~\cite{DiestelBook10noEE}. The \emph{algebraic cycle space} $\CF_{\alg}(G)$ of~$G$ is the subspace of $\EF$ consisting of the edge sets inducing even degrees at all the vertices. It contains precisely the elements of~$\EF$ that are orthogonal to the \emph{skew cut space} $\BF_{\sk}(G)$~\cite{CR08}, the subspace of $\EF$ consisting of all the cuts of $G$ with one side finite. The \emph{finite-cycle space} $\CF_{\fin}(G)$ is the subspace of $\EF$ generated (via finite sums) by the finite circuits of~$G$. 
This space $\CF_{\fin}(G)$ contains precisely the elements of~$\EF$ that are orthogonal to $\BF(G)$, the cut space of~$G$~\cite{DiestelBook10noEE, RDsBanffSurvey}. Thus,
\[
\CFtop  =  \BF_{\fin}^{\perp},\quad
\CF_{\alg} = \BF_{\sk}^\perp,\quad
\CF_{\fin}  =  \BF^{\perp}\text{.}
\]
\vskip-6pt \noindent
 Conversely,
\[
\CFtop^{\perp}= \BF_{\fin},\quad
\CF_{\alg}^\perp = \BF_{\sk},\quad
\CF_{\fin}^{\perp}=\BF\text{.}
\]\vskip6pt\noindent
Thus, for any of the six spaces $\FF$ just mentioned, we have $\FF^{\perp\perp}=\FF$. 

\goodbreak

Proofs of most of the above six identities were first published by Casteels and Richter~\cite{CR08}, in a more general setting.%
   \COMMENT{}
   Any remaining proofs can be found in~\cite{RDsBanffSurvey}, except for the inclusion~$\CF_{\alg}^\perp\supseteq \BF_{\sk}$, which is easy.%
   \COMMENT{}

The six subspaces of~$\EF$ mentioned above are the the ones most relevant to the homology of locally finite infinite graphs. See~\cite{RDsBanffSurvey}, Diestel and Spr\"ussel~\cite{Hom1}, and Georgakopoulos~\cite{ltop,lhom}.
Our aim in this note is to facilitate orthogonality proofs for these spaces by showing that, whenever $\FF$ is one of them, a set $D$ of edges is orthogonal to $\FF$ as soon as it is orthogonal to the minimal nonzero elements of~$\FF$.

This is easy when $\FF$ is $\CF_{\fin}$ or $\BF_{\fin}$ or $\BF_{\sk}$:

\begin{prop}
Let $\FF$ be a subspace of $\EF$ all whose elements are finite sets of edges.
Then $\FF$ is generated (via finite sums) by its $\subseteq$-minimal nonzero elements.
\end{prop}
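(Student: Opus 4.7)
The plan is to induct on the cardinality $|F|$ of an element $F\in\FF$, showing that every $F$ is a finite $\F_2$-sum of $\sub$-minimal nonzero elements of $\FF$. The base cases are $F=\emptyset$ (the empty sum) and $F$ itself $\sub$-minimal nonzero (a one-term sum). This induction is available precisely because every $F\in\FF$ is assumed finite.

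For the inductive step, suppose $F\in\FF$ is nonzero and not minimal. By the definition of minimality, there is a nonzero $F'\in\FF$ with $F'\subsetneq F$. Since $\FF$ is an $\F_2$-subspace, $F+F'\in\FF$; and because $F'\sub F$, we have $F+F'=F\sm F'$, whose cardinality is $|F|-|F'|<|F|$ (using $F'\neq\emptyset$). Both $F'$ and $F+F'$ thus lie in $\FF$ and have cardinality strictly less than $|F|$, so by the induction hypothesis each is a finite sum of $\sub$-minimal nonzero elements of $\FF$. Then so is $F=F'+(F+F')$.

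I expect no genuine obstacle here: the argument is entirely elementary, and the only point requiring a moment's care is to record that $F'$ must be chosen nonzero when witnessing non-minimality, since this is what forces the strict descent $|F+F'|<|F|$. The contrast with the main theorems of the paper is instructive, as this simple-minded descent collapses the moment $\FF$ is allowed to contain infinite edge sets and thus carries no well-founded size on which to induct.
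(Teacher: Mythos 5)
Your proof is correct and follows essentially the same descent-on-cardinality argument as the paper's: both decompose a non-minimal $F\in\FF$ as $F' + (F\smallsetminus F')$ with $\emptyset\neq F'\subsetneq F$ in $\FF$ and exploit that finiteness makes $|F|$ a well-founded measure. The only cosmetic difference is that the paper phrases it as a minimal counterexample and takes $F'$ itself to be $\subseteq$-minimal (so only $F\smallsetminus F'$ needs the recursion), whereas you take $F'$ to be an arbitrary nonzero proper subset and apply the induction hypothesis to both summands---either variant is fine.
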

\begin{proof}
For a contradiction suppose that some $F\in\FF$ is not a finite sum of finitely many minimal nonzero elements of $\FF$.
Choose $F$ with $|F|$ minimal.
As $F$ is not minimal itself, by assumption, it properly contains a minimal nonzero element $F'$ of $\FF$.
As $F$ is finite, $F + F'=F\sm F'\in\FF$ has fewer elements than $F$, so there is a finite family $(M_i)_{i\le n}$ of minimal nonzero elements of $\FF$ with $\sum_{i\le n} M_i=F+F'$.
This contradicts our assumption, as $F'+\sum_{i\le n} M_i=F$.
\end{proof}

\begin{cor}\label{cor:finite}
If $\FF\in\{\CF_{\fin}, \BF_{\fin}, \BF_{\sk}\}$, a set $D$ of edges is orthogonal to $\FF$ as soon as $D$ is orthogonal to all the minimal nonzero elements of~$\FF$.\qed
\end{cor}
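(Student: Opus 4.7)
The plan is to deduce Corollary \ref{cor:finite} directly from the preceding proposition, once we check that the proposition applies to each of the three spaces, and that orthogonality passes through finite sums.

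First I would verify that each $\FF\in\{\CF_{\fin},\BF_{\fin},\BF_{\sk}\}$ consists solely of finite edge sets. For $\CF_{\fin}$ this is immediate, since $\CF_{\fin}$ is generated by finite circuits via finite sums. For $\BF_{\fin}$ it is built into the definition. For $\BF_{\sk}$ the key observation is local finiteness of $G$: a cut with one finite side is the edge boundary of a finite vertex set in a locally finite graph, hence finite. So in all three cases the hypothesis of the proposition is met.

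Next, applying the proposition, for any $F\in\FF$ there is a finite family $(M_i)_{i\le n}$ of $\subseteq$-minimal nonzero elements of $\FF$ with $F=\sum_{i\le n}M_i$. Assume now that $D\in\EF$ is orthogonal to every such~$M_i$, so each $|D\cap M_i|$ is finite and even. Since every $M_i$ is finite, so is $D\cap M_i$, and since $F$ is finite, so is $D\cap F$. By bilinearity of the $\F_2$ inner product on finite sets (or directly from the identity $D\cap\sum_i M_i=\triangle_i(D\cap M_i)$ and inclusion-exclusion mod~$2$), we get
\[
|D\cap F|\;\equiv\;\sum_{i\le n}|D\cap M_i|\;\equiv\;0\pmod 2,
\]
so $D$ is orthogonal to $F$. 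As $F\in\FF$ was arbitrary, $D$ is orthogonal to $\FF$.

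The only step that might trip one up is the verification that $\BF_{\sk}$ consists of finite sets, which crucially uses local finiteness; otherwise the argument is a routine two-line reduction to the proposition. Note also that local finiteness is essential here: without it, the present corollary would fail for $\BF_{\sk}$ in the same way that the main theorems of the paper require genuine infinite machinery for $\CFtop$, $\CF_{\alg}$ and $\BF$.
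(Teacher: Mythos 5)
Your proof is correct and takes the same route the paper intends: the corollary is stated with a terminal $\qed$ precisely because it is an immediate consequence of the preceding proposition, and you have simply written out the two routine checks — that each of the three spaces consists of finite edge sets (using local finiteness for $\BF_{\sk}$), and that orthogonality to each summand passes to a finite sum via parity of the symmetric difference.
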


When $\FF\in\{\CFtop, \CF_{\alg},\BF\}$, the statement of Corollary~\ref{cor:finite} is generally false for graphs that are not $3$-connected. Here are some examples.

For $\FF=\BF$, let $G$ be the graph obtained from the $\N\times\Z$ grid by doubling every edge between two vertices of degree $3$ and subdividing all the new edges.
The set $D$ of the edges that lie in a $K^3$ of~$G$ is orthogonal to every bond $F$ of~$G$: their intersection $D\cap F$ is finite and even.
But $D$ is not orthogonal to every element of $\FF=\BF$, since it meets some cuts that are not bonds infinitely.

For $\FF=\CFtop$, let $B$ be an infinite bond of the infinite ladder $H$, and let $G$ be the graph obtained from $H$ by subdividing every edge in~$B$.
Then the set $D$ of edges that are incident with subdivision vertices has a finite and even intersection with every topological circuit $C$, finite or infinite, but it is not orthogonal to every element of~$\CFtop$, since it meets some of them infinitely.

For $\FF=\CF_{\alg}$ we can re-use the example just given for~$\CFtop$, since for 1-ended graphs like the ladder the two spaces coincide.

However, if $G$ is $3$-connected, an edge set is orthogonal to every element of $\CFtop, \CF_{\alg}$ or $\BF$ as soon as it is orthogonal to every minimal nonzero element:

\begin{theo}\label{maindual}
Let $G=(V,E)$ be a locally finite $3$-connected graph, and $F,D\sub E$.
\begin{enumerate}[label=\emph{(\roman*)}]
\item $F\in \CFtop^\perp$ as soon as $F$ is orthogonal to all the minimal nonzero elements of $\CFtop$, the topological circuits of~$G$.
\item $F\in \CF_{\alg}^\perp$ as soon as $F$ is orthogonal to all the minimal nonzero elements of $\CF_{\alg}$, the finite circuits and the edge sets of double rays in~$G$.
\item $D\in\BF^\perp $ as soon as $D$ is orthogonal to all the minimal nonzero elements of $\BF$, the bonds of~$G$.
\end{enumerate}
\end{theo}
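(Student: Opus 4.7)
The plan is to prove each of (i)--(iii) by contraposition: assuming orthogonality of $F$ (or~$D$) with all minimal nonzero elements of $\FF$, I rule out a purported failure of orthogonality with some other element of $\FF$ by constructing a minimal element of $\FF$ on which orthogonality already fails. Throughout, I rely on the six duality identities recorded above and on Corollary~\ref{cor:finite} applied to the three ``finite'' spaces $\CF_{\fin}$, $\BF_{\fin}$, $\BF_{\sk}$.

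A common first reduction handles the finite-circuit / star part of the hypothesis. In (i), every finite circuit is a topological circuit, so $F\perp\CFtop$ forces $F\perp\CF_{\fin}$, whence $F\in\CF_{\fin}^{\perp}=\BF$ is a cut, and it remains to show this cut is \emph{finite}. In (ii), the same reduction yields $F\in\BF$, and we must show one side of $F$ is finite. In (iii), the star $E(v)$ at any vertex $v$ is a bond of $G$, since $G-v$ is connected by $3$-connectivity, so orthogonality with these stars gives $D$ finite even degree at every vertex, and it remains to show $D$ itself is finite. Each remaining upgrade is by contradiction, with a minimal element of $\FF$ serving as the witness. For (i), given an infinite cut $F=\delta(V_1,V_2)$ I would pick infinitely many of its edges $e_1,e_2,\dots$ and splice them into a single topological circle in $|G|$ whose edge set contains every $e_n$, by linking consecutive crossings through internally disjoint paths in $G[V_1]$ and $G[V_2]$ supplied by $3$-connectivity and closing the arc through suitable ends of $|G|$. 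For (ii) an analogous construction yields a double ray that either crosses $F$ infinitely (when $F$ is itself infinite) or has its two tails on opposite sides of $F$, giving an odd crossing count (when $F$ is finite with both sides infinite). For (iii), if $D$ is infinite then its support subgraph is locally finite with minimum degree at least~$2$ on its support, hence contains either a double ray or infinitely many finite even components; the task is to find a bipartition $V=V_1\sqcup V_2$ with both $G[V_i]$ connected that alternates along this structure so that the resulting bond crosses $D$ in infinitely many edges.

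The main obstacle in all three parts is making the witness \emph{genuinely minimal} rather than a mere thin sum of minimal elements. For (i) and (ii) this amounts to threading infinitely many concatenation paths into a single arc rather than a disjoint family, which I expect to require a greedy construction along a normal spanning tree of $G$, or a compactness argument in the Freudenthal compactification $|G|$ to close up the topological circle at an end. For (iii) the obstacle is sharper still: we must keep both $G[V_1]$ and $G[V_2]$ connected while forcing the bipartition to alternate along the chosen double ray, or to separate infinitely many components of $D$, and this will demand a careful use of $3$-connectivity to reroute both sides through the large supply of uncommitted vertices of $G$. I anticipate this bond-construction step in (iii) to be the most delicate part of the whole argument.
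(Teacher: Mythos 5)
Your high-level strategy—contraposition, with a single minimal element of $\FF$ serving as the witness—is the same as the paper's, and your first reductions (for~(i) and~(ii), that $F$ is a cut; for~(iii), that $D$ induces even degrees from orthogonality with vertex stars) are sound. But the actual construction of the witness, which is the substance of the theorem, is hand-waved in all three cases, and two of the specific ideas you float would not go through.

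For~(i), you propose to splice infinitely many edges of the cut $F=\delta(V_1,V_2)$ into one topological circle by linking consecutive crossings with ``internally disjoint paths in $G[V_1]$ and $G[V_2]$ supplied by $3$-connectivity.'' But $3$-connectivity of $G$ says nothing about the internal connectivity of $G[V_1]$ or $G[V_2]$: if $F$ is a cut that is not a bond these induced subgraphs need not even be connected, and even for a bond you would need the linking paths to be \emph{pairwise} disjoint (not merely internally disjoint) to obtain an arc, which connectivity of the $G[V_i]$ alone does not supply; nor is it clear the two tails converge to a common end so the arc can be closed. The paper avoids all of this: it fixes an end $\omega$ in the closure of $F$, takes three disjoint rays in~$\omega$, and uses Lemma~\ref{lem:disjoint_fans} to manufacture a single ray $W\in\omega$ that crosses $F$ infinitely and a second disjoint ray $R\in\omega$; the circle $R\cup W\cup P\cup\{\omega\}$ (any $R$--$W$ path $P$) then already meets $F$ infinitely. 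No infinite splicing is required.

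For~(iii) the gap is deeper. Orthogonality with vertex stars only gives $D\in\CF_{\alg}$, so a priori $D$ could have a double-ray component; your plan of ``alternating a bipartition along a double ray'' relies on that. The paper instead exploits orthogonality with \emph{all} bonds to establish the key structural fact $(\star)$: every edge $e\in D$ lies on a finite circuit $\sub D$ (otherwise $e$ is a bridge of $(V,D)$ and some bond of $G$ meets $D$ in exactly $\{e\}$), so $D$ decomposes into disjoint \emph{finite} circuits, ruling out the double-ray case entirely. The contradiction for infinite $D$ then comes from a genuinely different construction: two rays $R,R'$ that ``hug'' $D$, extended to disjoint combs $T,T'$ with teeth in infinitely many disjoint finite circuits $C_i\sub D$; the bond $E(A,B)$ separating $T$ from $T'$ meets each such $C_i$ at least twice, hence meets $D$ infinitely. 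Your sketch contains neither the decomposition $(\star)$ nor the comb idea, and keeping both sides of the bipartition connected while ``alternating'' is exactly the difficulty the hug-and-comb machinery is built to overcome. Note also that the paper weakens $3$-connectedness to $2$-connectedness together with conditions on the ends (Theorem~\ref{mainbulky}); your argument as sketched does not separate out which part of $3$-connectedness is really used where.
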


Although Theorem~\ref{maindual} fails if we replace the assumption of $3$-connectedness with $2$-connectedness, it turns out that we need a little less than $3$-connectedness.
Recall that an end $\omega$ of~$G$ has (combinatorial) \emph{vertex-degree $k$} if $k$ is the maximum number of vertex-disjoint rays in~$\omega$.
Halin \cite{halin74} showed that every end in a $k$-connected locally finite graph has vertex-degree at least $k$.
Let us call an end $\omega$ of~$G$ \emph{$k$-padded} if for every ray $R\in\omega$ there is a neighbourhood $U$ of~$\omega$ such that for every vertex $u\in U$ there is a \emph{$k$-fan} from $u$ to $R$ in~$G$, a subdivided $k$-star with centre $u$ and leaves on~$R$.\footnote{For example, if $G$ is the union of complete graphs $K_1,K_2,\dots$ with $|K_i|=i$, each meeting the next in exactly one vertex (and these are all distinct), then the unique end of~$G$ is $k$-padded for every $k\in\N$.}
If every end of~$G$ is $k$-padded, we say that $G$ is \emph{$k$-padded at infinity}.
Note that $k$-connected graphs are $k$-padded at infinity.
Our proof of Theorem~\ref{maindual}(i) and~(ii) will use only that every end has vertex-degree at least $3$ and that $G$ is $2$-connected.
Similarly, and in a sense dually, our proof of Theorem~\ref{maindual}(iii) uses only that every end has vertex-degree at least $2$ and $G$ is $3$-connected at infinity.

\begin{theo}\label{mainbulky}
Let $G=(V,E)$ be a locally finite $2$-connected graph.
\begin{enumerate}[label=\emph{(\roman*)}]
\item If every end of~$G$ has vertex-degree at least $3$, then $F\in \CFtop^\perp$ as soon as $F$ is orthogonal to all the minimal nonzero elements of $\CFtop$, the topological circuits of~$G$.
\item If every end of~$G$ has vertex-degree at least $3$, then $F\in \CF_{\alg}^\perp$ as soon as $F$ is orthogonal to all the minimal nonzero elements of $\CF_{\alg}$, the finite circuits and the edge sets of double rays in~$G$.
\item If $G$ is $3$-padded at infinity, then $D\in\BF^\perp $ as soon as $D$ is orthogonal to all the minimal nonzero elements of $\BF$, the bonds of~$G$.
\end{enumerate}
\end{theo}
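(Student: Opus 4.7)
My plan is to handle all three parts by contradiction, reducing each to the construction of a single ``bad'' witness from the minimal elements of the target space. For~(i), the hypothesis that $F$ is orthogonal to every topological circuit forces orthogonality to every finite circuit, so by Corollary~\ref{cor:finite} and the identity $\CF_\fin^\perp=\BF$ the set $F$ is a cut $E(X,\bar X)$; since $\CFtop^\perp=\BF_\fin$, it suffices to show that this cut is finite, which I would do by constructing, if $F$ were infinite, a topological circuit meeting $F$ in infinitely many edges. Part~(ii) is analogous using $\CF_\alg^\perp=\BF_\sk$ in place of $\CF_\fin^\perp=\BF$: $F$ is again a cut, and to force it into $\BF_\sk$ (so that one side is finite) it suffices to produce, if both $X$ and $\bar X$ are infinite, a double ray with one tail in~$X$ and one in~$\bar X$, which then meets $F$ in an odd number of edges. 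Part~(iii) is dual: Corollary~\ref{cor:finite} already gives that $D$ is orthogonal to every finite cut, so the only remaining obstruction is an infinite cut $C=E(X,\bar X)$ with $|D\cap C|=\infty$, and the contradiction will come from producing a bond $B$ with $|D\cap B|=\infty$.

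The constructions proceed in the same spirit. Let $e_i=x_iy_i$ with $x_i\in X$, $y_i\in\bar X$ enumerate the relevant infinite set of edges --- either of $F$ itself in~(i) and~(ii), or of $D\cap C$ in~(iii). By local finiteness and a diagonal argument, I may assume the endpoints of the $e_i$ converge to a common end $\omega$. The connectivity hypothesis at~$\omega$ then enters. For~(i) and~(ii), vertex-degree at least three at~$\omega$ yields three pairwise disjoint rays to~$\omega$, at least one of which lies on the ``opposite'' side of~$C$ from the accumulating~$x_i$; combined with $2$-connectedness of~$G$, used to route short connecting paths in $X$ and in $\bar X$ between consecutive endpoints of the~$e_i$, this allows me to thread the $e_i$ into an infinite walk whose closure in the Freudenthal compactification $|G|$ is a topological circle (for~(i)) or a double ray with tails on different sides of~$F$ (for~(ii)). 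For~(iii), the $3$-paddedness of~$\omega$ provides, for a chosen ray $R$ into~$\omega$ and all vertices in a sufficiently small neighbourhood of~$\omega$, a $3$-fan to~$R$; these fans let me absorb the awkward components of $G[X]$ and $G[\bar X]$ near~$\omega$ into larger connected pieces, producing a partition $(A,V\sm A)$ inducing two connected subgraphs of~$G$ with $E(A,V\sm A)\cap D$ still infinite --- that is, a bond contradicting the hypothesis.

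The hardest step, I expect, will be the case analysis according to how the disjoint rays into~$\omega$ distribute between $X$ and $\bar X$. For~(i) and~(ii), if all three rays happen to lie on one side of~$C$, the naive zig-zag construction has no ray-backbone on the other side along which to thread the $e_i$; handling this subcase will use $2$-connectedness to route the necessary connecting paths outside the finite vertex-separators isolating~$\omega$, possibly after extracting a further subsequence of the $e_i$. For~(iii), the analogous difficulty is that the components of $G[X]$ and $G[\bar X]$ carrying infinitely many endpoints of the $e_i$ need not immediately fit together as the two connected sides of a bond; the role of $3$-paddedness is precisely to supply the fan-paths that merge them while preserving the infinite $D$-intersection, and I expect the proof to first reduce, by local finiteness and passage to sub-cuts, to the case where a single infinite accumulating component remains on each side before performing the merging.
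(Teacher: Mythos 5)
There is a genuine gap in your plan for part~(ii), and a related weakness in~(i) and~(iii).

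For~(ii) you want to reduce to showing that a cut $F=E(X,\bar X)$ with both sides infinite admits a double ray with one tail eventually in~$X$ and the other eventually in~$\bar X$. Such a double ray need not exist: take $G=\Z^2$ (the grid, one end of infinite vertex-degree, $2$-connected) and $X=\{(0,2k):k\in\Z\}$. Then $G[X]$ consists of isolated vertices, so no ray, let alone a double ray, has a tail eventually in~$X$, yet both sides of the cut are infinite. The theorem is nonetheless true here; the offending minimal element is a double ray that \emph{crosses $F$ infinitely often} (the column $x=0$ itself), not one with tails on opposite sides. The paper's proof of~(ii) works precisely because it constructs a double ray containing infinitely many edges of~$F\cap D$, without ever claiming that the tails settle on separate sides, and without ever establishing that $F$ is a cut with one finite side. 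Your reduction to the "odd, finite crossing" picture silently discards the case that actually carries the argument.

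A related issue undermines your ``zig-zag'' for~(i): routing connecting paths \emph{inside}~$X$ between consecutive $x_i$, and inside~$\bar X$ between the $y_i$, requires $G[X]$ and $G[\bar X]$ to be connected near~$\omega$, which $2$-connectedness of~$G$ does not give (see the same $\Z^2$ example). The paper's construction does not attempt to stay on one side: it fixes three disjoint rays $\RF$ into~$\omega$, uses Lemma~\ref{lem:disjoint_fans} to obtain infinitely many pairwise disjoint $2$-fans from the $B$-side to~$\bigcup\RF$, and assembles a ray~$W$ that meets $F$ infinitely while avoiding one ray $R\in\RF$; the circle $C(R,W,P)$ then closes the contradiction. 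Your sketch also never invokes Lemma~\ref{lem:disjoint_fans} (disjointness of the fans/linkages near~$\omega$) or Lemma~\ref{cor:connected_witness} (fans found locally), which are the two load-bearing technical lemmas. Finally, for~(iii) the paper does not start from a given infinite cut $C$ and try to ``merge components''; it first proves, via~$(\star)$, that $D$ is a disjoint union of finite circuits, then builds two rays hugging~$D$, grows two disjoint combs with teeth on infinitely many common circuits, and reads off a bond meeting~$D$ infinitely from one comb's complement. Your merging plan, as described, gives no mechanism to guarantee that the modified partition still meets $D$ infinitely or that both sides become connected, and it is unclear it can be pushed through.
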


In general, our notation follows~\cite{DiestelBook10noEE}. 
In particular, given an end $\omega$ in a graph~$G$ and a finite set $S\sub V(G)$ of vertices, we write $C(S,\omega)$ for the unique component of $G-S$ that contains a ray $R\in\omega$.
The \emph{vertex-degree} of $\omega$ is the maximum number of vertex-disjoint rays in~$\omega$.
The mathematical background required for this paper is covered in~\cite{RDsBanffSurvey,Hom1}.
For earlier results on the cycle and cut space see Bruhn and Stein~\cite{Degree,BruhnSteinEndDuality}.

\section{Finding disjoint paths and fans}

Menger's theorem that the smallest cardinality of an $A$--$B$ separator in a finite graph is equal to the largest cardinality of a set of disjoint $A$--$B$ paths trivially extends to infinite graphs.
Thus in a locally finite $k$-connected graph, there are $k$ internally disjoint paths between any two vertices.
In Lemmas~\ref{lem:disjoint_fans} and~\ref{cor:connected_witness} we show that, for two such vertices that are close to an end $\omega$, these connecting paths need not use vertices too far away from~$\omega$.

In a graph $G$ with vertex sets $X,Y\sub V(G)$ and vertices $x,y\in V(G)$,
a \emph{$k$-fan} from $X$ (or $x$) to~$Y$ is a subdivided $k$-star whose center lies in~$X$ (or is $x$) and whose leaves lie in~$Y$.
A \emph{$k$-linkage} from $x$ to $y$ is a union of~$k$ internally disjoint $x$--$y$ paths.
We may refer to a sequence $(v_i)_{i\in\N}$ simply by $(v_i)$, and use $\bigcup(v_i):=\bigcup_{i\in\N}\{v_i\}$ for brevity.

\begin{lem}\label{lem:disjoint_fans}
Let $G$ be a locally finite graph with an end~$\omega$, and let $(v_i)_{i\in\N}$ and $(w_i)_{i\in\N}$ be two sequences of vertices converging to~$\omega$.
Let $k$ be a positive integer.
\begin{enumerate}[label=\emph{(\roman*)}]
\item If for infinitely many $n\in\N$ there is a $k$-fan from $v_n$ to $\bigcup(w_i)$, then there are infinitely many disjoint such $k$-fans.
\item If for infinitely many $n\in\N$ there is a $k$-linkage from $v_n$ to $w_n$, then there are infinitely many disjoint such $k$-linkages.
\end{enumerate}
\end{lem}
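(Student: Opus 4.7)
The plan is to prove (i) and (ii) by a common greedy construction: suppose I have already built pairwise disjoint $k$-fans (resp.\ $k$-linkages) $F_1,\dots,F_m$ from our sequences and set $A := V(F_1)\cup\cdots\cup V(F_m)$ (which is finite since each $F_j$ is); the task is to produce one more such $k$-fan (resp.\ $k$-linkage) contained in $G-A$, from some $v_n$ not used before. Iterating yields the desired infinite family, so it suffices to prove the following: \emph{for every finite $A\subseteq V(G)$, if the hypothesis of (i) (resp.\ (ii)) holds in~$G$, then for infinitely many $n$ there is a $k$-fan from $v_n$ to $\bigcup(w_i)\setminus A$ in $G-A$ (resp.\ a $k$-linkage from $v_n$ to $w_n$ in $G-A$).}

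I would prove this reduction by contradiction. If it fails, then Menger's theorem applied inside $G-A$ supplies, for each of infinitely many $n$ with $v_n\in C(A,\omega)$, a vertex separator $T_n\subseteq V(G)\setminus A\setminus\{v_n\}$ of size $<k$ of $v_n$ from the target inside $G-A$; hence $T_n\cup A$ separates $v_n$ from the target in~$G$. Because $|T_n|<k$ is bounded and $V(G)$ is a discrete subspace of the compact Freudenthal compactification $|G|$, I would pass to a subsequence $(n_l)$ along which, after enumerating $T_{n_l}=\{t^1_{n_l},\dots,t^{j}_{n_l}\}$ for a fixed $j<k$, each coordinate $(t^i_{n_l})_l$ either is eventually constant (giving a fixed set $T^*\subseteq V(G)$) or converges in $|G|$ to some end $\omega_i$ of~$G$. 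If none of these $\omega_i$ equals our end~$\omega$, then for large $l$ all non-constant coordinates of $T_{n_l}$ lie in components of $G-(T^*\cup A)$ different from $C(T^*\cup A,\omega)$, while both $v_{n_l}$ and the tail of $(w_i)$ lie in $C(T^*\cup A,\omega)$; deleting those non-constant coordinates therefore does not disconnect $v_{n_l}$ from the target, contradicting that $T_{n_l}\cup A$ separates.

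The hard case is when some $\omega_i=\omega$, i.e.\ at least one coordinate of the separator drifts along our sequences into~$\omega$. Here the plan is to feed the hypothesis into a K\"onig/compactness argument: the existence of $k$-fans (resp.\ $k$-linkages) for infinitely many $n$, combined with $v_n,w_n\to\omega$, should produce $k$ pairwise vertex-disjoint rays in~$\omega$ (in case (i) I would first use that, for large $n$, the leaves of the $k$-fan from $v_n$ may be chosen with arbitrarily large index, so they too converge to~$\omega$). But $T_{n_l}\cap C(T^*\cup A,\omega)$ has fewer than $k$ vertices, so it cannot meet all $k$ disjoint $\omega$-rays; a missed ray then provides a $v_{n_l}$-to-target path in $G-(T_{n_l}\cup A)$, again contradicting separation. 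The principal obstacle is exactly this last step: squeezing $k$ disjoint rays at~$\omega$ out of the hypothesis, and then using them to outrun the drifting separator. The remainder is routine bookkeeping via finite Menger and Freudenthal extraction.
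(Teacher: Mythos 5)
The reduction you make at the start is exactly the paper's: to get infinitely many disjoint $k$-fans (or $k$-linkages) it suffices to show that for every finite set of vertices there is one avoiding it. After that the two arguments part ways, and yours has a genuine gap.

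Your plan in the ``hard case'' — when some coordinate of the Menger separators $T_{n_l}$ drifts into $\omega$ — is to extract $k$ pairwise disjoint rays in $\omega$ from the hypothesis and use them to outrun the drifting separator. But the hypothesis does not yield $k$ disjoint rays in $\omega$: it is entirely possible for there to be $k$-fans from every $v_n$ (indeed from every vertex near $\omega$) while $\omega$ has vertex-degree~$1$. The paper's own footnote supplies a counterexample: take $G$ to be a chain of complete graphs $K_1,K_2,\dots$ meeting consecutively in single cut-vertices. Its unique end is $k$-padded for every $k$ (so $k$-fans to any ray exist from all sufficiently late $v_n$), yet every two rays in $\omega$ share all but finitely many cut-vertices, so there are no two disjoint rays, let alone $k$. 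Thus the step you yourself flag as the obstacle is not merely hard — it is false as stated, and the contradiction you need in the hard case never materialises. (Your ``easy case'' has a smaller but real issue too: a sequence converging to an end $\omega_i\ne\omega$ need not leave $C(T^*\cup A,\omega)$ unless $T^*\cup A$ happens to separate $\omega$ from $\omega_i$, which you have not arranged.)

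The paper sidesteps the drifting-separator difficulty altogether by inducting on~$k$ rather than extracting limits of separators. Assuming the claim for $k-1$, and that some finite $S$ meets all the desired $k$-fans, one gets infinitely many pairwise disjoint $(k-1)$-fans $F_i$ inside $C(S,\omega)$ by the inductive hypothesis, and Menger gives $(k-1)$-separators $S_i$ for $v'_i$ inside $C(S,\omega)$. The crucial structural observation is that $S_i\subseteq V(F_i)$, because a $(k-1)$-separator of the centre of a subdivided $(k-1)$-star from its leaves must pick one vertex from each arm. Disjointness of the $F_i$ then forces disjointness of the components $C_i$ around the $v'_i$; since each $C_i$ must have at least $k$ neighbours in $S\cup S_i$ while $|S_i|=k-1$, each $C_i$ sends a neighbour into $S$, contradicting local finiteness of $G$ and finiteness of $S$. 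The linkage case (ii) is handled similarly but needs an extra Ramsey-type thinning because the separator components around $v_i$ and $w_i$ can interact. Your compactness/subsequence framework would need a fundamentally different idea to close the hard case; it cannot be repaired just by tightening the bookkeeping.
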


\begin{proof}
For a contradiction, suppose $k\in\N$ is minimal such that there is a locally finite graph $G=(V,E)$ with sequences $(v_i)_{i\in\N}$ and $(w_i)_{i\in\N}$ in which either (i) or (ii) fails.
Then $k>1$, since for every finite set $S\sub V(G)$ the unique component $C(S,\omega)$ of $G-S$ that contains rays from $\omega$ is connected and contains all but finitely many vertices from $\bigcup(v_i)$ and $\bigcup(w_i)$.

For a proof of (i) it suffices to show that for every finite set $S\sub V(G)$ there is an integer $n\in \N$ and a $k$-fan from $v_n$ to~$\bigcup (w_i)$ avoiding~$S$.
Suppose there is a finite set $S\sub V(G)$ that meets all $k$-fans from $\bigcup(v_i)$ to~$\bigcup(w_i)$.
By the minimality of~$k$, there are infinitely many disjoint $(k-1)$-fans from $\bigcup(v_i)$ to~$\bigcup(w_i)$ in $C \coloneqq C(S,\omega)$.
Thus, there is a subsequence $(v'_i)_{i\in\N}$ of $(v_i)_{i\in\N}$ in~$C$ and pairwise disjoint $(k-1)$-fans $F_i\sub C$ from $v'_i$ to~$\bigcup(w_i)$ for all $i\in\N$.
For every $i\in\N$ there is by Menger's theorem a $(k-1)$-separator $S_i$ separating $v'_i$ from $\bigcup(w_i)$ in~$C$, as by assumption there is no $k$-fan from $v'_i$ to~$\bigcup(w_i)$ in~$C$.
Let $C_i$ be the component of $G-(S\cup S_i)$ containing $v_i'$.

Since $F_i$ is a subdivided $|S_i|$-star, $S_i\sub V(F_i)$.
Hence for all $i\ne j$, our assumption of $F_i\cap F_j=\emptyset$ implies that $F_i\cap S_j=\emptyset$, and hence that $F_i\cap C_j=\emptyset$.
But then also $C_i\cap C_j=\emptyset$, since any vertex in $C_i\cap C_j$ coud be joined to $v_j'$ by a path $P$ in $C_j$ and to $v_i'$ by a path $Q$ in $C_i$, giving rise to a $v_j'$--$\bigcup (w_i)$ path in $P\cup Q\cup F_i$ avoiding $S_j$, a contradiction.%
  \COMMENT{}

As $S\cup S_i$ separates $v'_i$ from $\bigcup(w_i)$ in~$G$ and there is, by assumption, a $k$-fan from $v'_i$ to~$\bigcup(w_i)$ in~$G$, there are at least $k$ distinct neighbours of~$C_i$ in $S\cup S_i$.
Since $|S_i|=k-1$, one of these lies in~$S$. 
This holds for all $i\in\N$.
As $C_i\cap C_j=\emptyset$ for distinct $i$ and $j$, this contradicts our assumption that~$G$ is locally finite and $S$ is finite.
This completes the proof of (i).

For (ii) it suffices to show that for every finite set $S\sub V(G)$ there is an integer $n\in\N$ such that there is a $k$-linkage form $v_n$ to $w_n$ avoiding $S$.
Suppose there is a finite set~$S\sub V(G)$ that meets all $k$-linkages from $v_i$ to $w_i$ for all $i\in\N$.
By the minimality of $k$ there is an infinite family $(L_i)_{i\in I}$ of disjoint $(k-1)$-linkages $L_i$ in $C\coloneqq C(S,\omega)$ from $v_i$ to $w_i$.
As earlier, there are pairwise disjoint $(k-1)$-sets $S_i\sub V(L_i)$ separating $v_i$ from $w_i$ in~$C$, for all $i\in I$.
Let $C_i,D_i$ be the components of $C-S_i$ containing $v_i$ and $w_i$, respectively.
For no $i\in I$ can both $C_i$ and $D_i$ have $\omega$ in their closure, as they are separated by the finite set $S\cup S_i$.
Thus for every $i\in I$ one of $C_i$ or $D_i$ contains at most finitely many vertices from $\bigcup_{i\in I} L_i$.
By symmetry, and replacing $I$ with an infinite subset of itself if necessary, we may assume the following: 
\begin{txteq}\label{finitely_many}
The components $C_i$ with $i\in I$ each contain only finitely many\\ vertices from $\bigcup_{i\in I} L_i$.
\end{txteq}

If infinitely many of the components $C_i$ are pairwise disjoint, then $S$ has infinitely many neighbours as earlier, a contradiction.
By Ramsey's theorem, we may thus assume that 
\begin{equation}\label{not_empty}
C_i\cap C_j\neq\emptyset \text{ \em for all }i,j\in I.
\end{equation}

Note that if $C_i$ meets $L_j$ for some $j\ne i$, then $C_i\supseteq L_j$, since $L_j$ is disjoint from $L_i\supseteq S_i$.
By~(\ref{finitely_many}), this happens for only finitely many $j>i$.
We can therefore choose an infinite subset of~$I$ such that $C_i\cap L_j=\emptyset$ for all $i<j$ in $I$.
In particular, $(C_i\cup S_i)\cap S_j =\emptyset$ for $i<j$.
By~(\ref{not_empty}), this implies that 
\begin{equation}\label{nested}
C_i\cup S_i\sub C_j \text{ \em for all } i<j.
\end{equation}

By assumption, there exists for each $i\in I$ some $v_i$--$w_i$ linkage of $k$ independent paths in~$G$, one of which avoids $S_i$ and therefore meets $S$.
Let $P_i$ denote its final segment from its last vertex in $S$ to $w_i$.
As $w_i\in C\sm (C_i\cup S_i)$ and $P_i$ avoids both $S_i$ and $S$ (after its starting vertex in~$S$), we also have 
\begin{equation}\label{empty}
P_i\cap C_i=\emptyset.
\end{equation}

On the other hand, $L_i$ contains $v_i\in C_i\subseteq C_{i+1}$ and avoids $S_{i+1}$, so $w_i\in L_i\sub C_{i+1}$.
Hence $P_i$ meets $S_j$ for every $j\ge i+1$ such that $P_i\not\sub S\cup C_j$.
Since the $L_j\supseteq S_j$ are disjoint for different $j$, this happens for only finitely many $j>i$.
Deleting those $j$ from $I$, and repeating that argument for increasing $i$ in turn, we may thus assume that $P_i\sub S\cup C_{i+1}$ for all $i\in I$.
By (3) and (4) we deduce that $P_i\sm S$ are now disjoint for different values of $i\in I$.
Hence $S$ contains a vertex of infinite degree, a contradiction.
\end{proof}

Recall that $G$ is \emph{$k$-padded} at an end $\omega$ if for every ray $R\in\omega$ there is a neighbourhood $U$ such that for all vertices $u\in U$ there is a $k$-fan from $u$ to~$R$ in~$G$.
Our next lemma shows that, if we are willing to make $U$ smaller, we can find the fans locally around $\omega$:

\begin{lem}\label{cor:connected_witness}\label{lem:connected_witness}
Let $G$ be a locally finite graph with a $k$-padded end $\omega$.
For every ray $R\in\omega$ and every finite set $S\sub V(G)$ there is a neighbourhood $U\subseteq C(S,\omega)$ of $\omega$ such that from every vertex $u\in U$ there is a $k$-fan in $C(S,\omega)$ to $R$.
\end{lem}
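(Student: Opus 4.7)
The plan is to argue by contradiction, using Lemma~\ref{lem:disjoint_fans}(i) as the main lever applied to a hypothetical ``bad'' sequence in $C(S,\omega)$ and to the vertices of (a tail of) $R$.

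First, replace $R$ by its tail $R'$ contained in $C(S,\omega)$: such a tail exists because $\omega$ lies in the closure of $C(S,\omega)$ in $|G|$, so $R$ is eventually in $C(S,\omega)$. Then $R' \in \omega$, and applying the $k$-paddedness hypothesis to $R'$ yields a neighbourhood $U_0$ of $\omega$; by intersecting with $C(S,\omega)$ (itself a neighbourhood of $\omega$) I may arrange $U_0 \subseteq C(S,\omega)$. Every $u \in U_0$ therefore admits a $k$-fan from $u$ to $R'$ in $G$.

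Next, suppose the conclusion fails. Choosing a descending sequence of neighbourhoods of $\omega$ inside $U_0$ whose vertex sets shrink to the empty set, I extract a sequence $(v_n)_{n\in\N}$ in $U_0$ converging to $\omega$ such that no $v_n$ admits a $k$-fan to $R$ inside $C(S,\omega)$. The key observation is that any $k$-fan from $v_n$ to $R'$ in $G$ that avoids $S$ must lie entirely in $C(S,\omega)$: it is a connected subgraph whose centre $v_n$ is in $C(S,\omega)$, so if it avoids $S$ it sits in a single component of $G-S$, namely $C(S,\omega)$; and its leaves are on $R' \subseteq R$. Such a fan would contradict the choice of $v_n$, so every $k$-fan from $v_n$ to $R'$ in $G$ must meet $S$.

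Finally, apply Lemma~\ref{lem:disjoint_fans}(i) with $(v_n)$ as above and $(w_i)$ enumerating $V(R')$---both converge to $\omega$, and by $k$-paddedness every $v_n$ sends a $k$-fan to $\bigcup(w_i)$ in $G$. The lemma produces infinitely many pairwise disjoint such $k$-fans, each of which, by the previous paragraph, must meet the finite set $S$; this is impossible. The main obstacle is the bookkeeping that ties $R'$ to $R$ and forces both centres and leaves of the produced fans into $C(S,\omega)$ so that the fans are legitimately fans ``to $R$ in $C(S,\omega)$''; once this setup is in place, Lemma~\ref{lem:disjoint_fans}(i) delivers the contradiction with no further work.
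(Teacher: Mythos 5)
Your proof is correct and follows essentially the same route as the paper's: negate the conclusion to get a sequence of ``bad'' vertices converging to $\omega$, invoke $k$-paddedness to obtain $k$-fans from them to (a tail of) $R$ in $G$, apply Lemma~\ref{lem:disjoint_fans}(i) to make infinitely many of them pairwise disjoint, and note that each such fan must meet $S$ (else, being connected with centre in $C(S,\omega)$, it would give a forbidden fan inside $C(S,\omega)$), contradicting the finiteness of $S$. The only cosmetic difference is that you replace $R$ by a tail $R'\subseteq C(S,\omega)$ before applying $k$-paddedness, whereas the paper applies it to $R$ directly; both work, since a disjoint fan avoiding $S$ automatically lands in $C(S,\omega)$.
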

\begin{proof}
Suppose that, for some $R\in\omega$ and finite $S\sub V(G)$, every neighbourhood $U\sub C(S,\omega)$ of~$\omega$ contains a vertex $u$ such that $C(S,\omega)$ contains no $k$-fan from $u$ to $R$.
Then there is a sequence $u_1,u_2,\dots$ of such vertices converging to $\omega$.
As $\omega$ is $k$-padded there are $k$-fans from infinitely many $u_i$ to $R$ in~$G$.
By Lemma~\ref{lem:disjoint_fans}(i) we may assume that these fans are disjoint.
By the choice of $u_1,u_2,\dots$, all these disjoint fans meet the finite set $S$, a contradiction.
\end{proof}

\section{The proof of Theorems~\ref{maindual} and \ref{mainbulky}}\label{proofofmaindual}

As pointed out in the introduction, Theorem~\ref{mainbulky} implies Theorem~\ref{maindual}.
It thus suffices to prove Theorem~\ref{mainbulky}, of which we prove (i) first.
Consider a set $F\neq\emptyset$ of edges that meets every circuit of~$G$ evenly.
We have to show that $F\in \CFtop^{\perp}$, i.e., that $F$ is a finite cut.
(Recall that $\CFtop^\perp$ is known to equal $\BF_{\fin}$, the finite-cut space~\cite{RDsBanffSurvey}.)
As $F$ meets every finite cycle evenly it is a cut, with bipartition $(A,B)$ say.
Suppose $F$ is infinite.
Let $\RF$ be a set of three disjoint rays that belong to an end $\omega$ in the closure of~$F$.
Every $R$--$R'$ path $P$ for two distinct $R,R'\in\RF$ lies on the unique topological circle $C(R,R',P)$ that is contained in $R\cup R'\cup P\cup\{\omega\}$.
As every circuit meets $F$ finitely, we deduce that no ray in $\RF$ meets $F$ again and again.
Replacing the rays in~$\RF$ with tails of themselves as necessary, we may thus assume that $F$ contains no edge from any of the rays in $\RF$.
Suppose $F$ separates $\RF$, with the vertices of $R\in\RF$ in~$A$ and the vertices of $R',R''\in\RF$ in $B$ say.
Then there are infinitely many disjoint $R$--$(R'\cup R'')$ paths each meeting $F$ at least once.
Infinitely many of these disjoint paths avoid one of the rays in~$B$, say $R''$.
The union of these paths together with $R$ and $R'$ contains a ray $W\in\omega$ that meets $F$ infinitely often.
For every $R''$--$W$ path $P$, the circle $C(W,R'',P)$ meets $F$ in infinitely many edges, a contradiction.
Thus we may assume that $F$ does not separate $\RF$, and that $G[A]$ contains $\bigcup\RF$.

As $\omega$ lies in the closure of $F$, there is a sequence $(v_i)_{i\in\N}$ of vertices in~$B$ converging to~$\omega$.
As~$G$ is $2$-connected there is a $2$-fan from each $v_i$ to $\bigcup \RF$ in~$G$.
By Lemma~\ref{lem:disjoint_fans} there are infinitely many disjoint $2$-fans from $\bigcup(v_i)$ to~$\bigcup\RF$.
We may assume that every such fan has at most two vertices in $\bigcup\RF$.
Then infinitely many of these fans avoid some fixed ray in $\RF$, say $R$. 
The two other rays plus the infinitely many $2$-fans meeting only these together contain a ray $W\in\omega$ that meets $F$ infinitely often and is disjoint from $R$.
Then for every $R$--$W$ path $P$ we get a contradiction, as $C(R,W,P)$ is a circle meeting $F$ in infinitely many edges.

For a proof of~(ii), note first that the minimal elements of $\CF_{\alg}$ are indeed the finite circuits and the edge sets of double rays in~$G$. Indeed, these are clearly in~$\CF_{\alg}$ and minimal. Conversely, given any element of~$\CF_{\alg}$, a set $D$ of edges inducing even degrees at all the vertices, we can greedily find for any given edge $e\in D$ a finite circuit or double ray with all its edges in~$D$ that contains~$e$. We may thus decompose $D$ inductively into disjoint finite circuits and edge sets of double rays, since deleting finitely many such sets from~$D$ clearly produces another element of~$\CF_{\alg}$, and including in each circuit or double ray chosen the smallest undeleted edge in some fixed enumeration of~$D$ ensures that the entire set $D$ is decomposed. If $D$ is minimal in~$\CF_{\alg}$, it must therefore itself be a finite circuit or the edge set of a double ray.

Consider a set $F$ of edges that fails to meet some set $D\in\CF_{\alg}$ evenly; we have to show that $F$ also fails to meet some finite circuit or double ray evenly. If $|F\cap D|$ is odd, then this follows from our decomposition of $D$ into disjoint finite circuits and edges sets of double rays. We thus assume that $F\cap D$ is infinite. Since $|G|$ is compact, we can find a sequence $e_1,e_2,\dots$ of edges in $F\cap D$ that converges to some end~$\omega$. Let $R_1,R_2,R_3$ be disjoint rays in~$\omega$, which exist by our assumption that $\omega$ has vertex-degree at least~3. Subdividing each edge~$e_i$ by a new vertex~$v_i$, and using that $G$ is 2-connected, we can find for every~$i$ a 2-fan from $v_i$ to $W = V(R_1\cup R_2\cup R_3)$ that has only its last vertices and possibly~$v_i$ in~$W$. By Lemma~\ref{lem:disjoint_fans}, with $w_1,w_2,\dots$ an enumeration of~$W$, some infinitely many of these fans are disjoint. Renaming the rays~$R_i$ and replacing $e_1,e_2,\dots$ with a subsequence as necessary, we may assume that either all these fans have both endvertices on~$R_1$, or that they all have one endvertex on~$R_1$ and the other on~$R_2$. In both cases all these fans avoid~$R_3$, so we can find a ray~$R$ in the union of $R_1$, $R_2$ and these fans (suppressing the subdividing vertices~$v_i$ again) that contains infinitely many~$e_i$ and avoids~$R_3$. Linking $R$ to a tail of~$R_3$ we thus obtain a double ray in $G$ that contains infinitely many~$e_i$, as desired.

To prove (iii), let $D\sub E$ be a set of edges that meets every bond evenly.
We have to show that $D\in\BF^\perp$, i.e., that $D$ has an (only finite and) even number of edges also in every cut that is not a bond.

As~$D$ meets every finite bond evenly, and hence every finite cut, it lies in $\BF_{\fin}^\perp=\CFtop$.
We claim that 
\begin{equation}\tag{$\star$}\label{disjoint_union}
  \begin{minipage}[c]{0.85\textwidth} 
	  \emph{$D$ is a disjoint union of finite circuits.}
  \end{minipage}
\end{equation}

To prove~(\ref{disjoint_union}), let us show first that every edge $e\in D$ lies in some finite circuit $C\sub D$.
If not, the endvertices $u,v$ of $e$ lie in different components of $(V, D\sm\{e\})$, and we can partition $V$ into two sets $A,B$ so that $e$ is the only $A$--$B$ edge in $D$.
The cut of~$G$ of all its $A$--$B$ edges is a disjoint union of bonds~\cite{DiestelBook10noEE}, one of which meets $D$ in precisely $e$.
This contradicts our assumption that $D$ meets every bond of~$G$ evenly.

For our proof of~(\ref{disjoint_union}), we start by enumerating $D$, say as $D=:\{e_1,e_2,\dots\}=:D_0$.
Let $C_0\sub D_0$ be a finite circuit containing $e_0$, let $D_1:= D_0\sm C_0$, and notice that $D_1$, like $D_0$, meets every bond of~$G$ evenly (because $C_0$ does).
As before, $D_1$ contains a finite circuit $C_1$ containing the edge $e_i$ with $i=\min \{j\mid e_j\in D_1\}$.
Continuing in this way we find the desired decomposition $D=C_1\cup C_2\cup\dots$ of $D$ into finite circuits.
This completes the proof of~(\ref{disjoint_union}).

As every finite circuit lies in $\BF^\perp$, it suffices by~(\ref{disjoint_union}) to show that $D$ is finite.
Suppose $D$ is infinite, and let $\omega$ be an end of~$G$ in its closure.
Let us say that two rays $R$ and $R'$ \emph{hug} $D$ if every neighbourhood $U$ of~$\omega$ contains a finite circuit $C\sub D$ that is neither separated from $R$ by~$R'$ nor from $R'$ by~$R$ in~$U$.
We shall construct two rays $R$ and $R'$ that hug~$D$, inductively as follows.

Let $S_0=\emptyset$, and let $R_0,R'_0$ be disjoint rays in~$\omega$. (These exist as~$G$ is $2$-connected~\cite{halin74}.)
For step $j\ge 1$, assume that let $S_i, R_i$, and $R'_i$ have been defined for all $i<j$
so that $R_i$ and $R_i'$ each meet $S_i$ in precisely some initial segement (and otherwise lie in $C(S_i,\omega)$) and $S_i$ contains the $i$th vertex in some fixed enumeration of $V$.
If the $j$th vertex in this enumeration lies in $C(S_{j-1},\omega)$, add to $S_{j-1}$ this vertex and, if it lies on $R_{j-1}$ or $R'_{j-1}$, the initial  segement of that ray up to it.
Keep calling the enlarged set $S_{j-1}$.
For the following choice of $S$ we apply Lemma~\ref{cor:connected_witness} to $S_{j-1}$ and each of $R_{j-1}$ and $R'_{j-1}$.
Let $S\supseteq S_{j-1}$ be a finite set such that from every vertex $v$ in $C(S,\omega)$ there are $3$-fans in $C(S_{j-1},\omega)$ both to~$R_{j-1}$ and to $R'_{j-1}$.
By~(\ref{disjoint_union}) and the choice of $\omega$, there is a finite circuit $C_j\sub D$ in $C(S,\omega)$.
Then $C_j$ can not be separated from $R_{j-1}$ or $R'_{j-1}$ in $C(S_{j-1},\omega)$ by fewer than three vertices, and thus there are three disjoint paths from $C_j$ to $R_{j-1}\cup R'_{j-1}$ in $C(S_{j-1},\omega)$.

There are now two possible cases.
The first is that in~$C(S_{j-1},\omega)$ the circuit $C_j$ is neither separated from $R_{j-1}$ by~$R'_{j-1}$ nor from $R'_{j-1}$ by~$R_{j-1}$.
This case is the preferable case.
In the second case one ray separates $C_j$ from the other.
In this case we will reroute the two rays to obtain new rays as in the first case.
We shall then `freeze' a finite set containing initial parts of these rays, as well as paths from each ray to $C_j$.
This finite fixed set will not be changed in any later step of the construction of $R$ and $R'$.
In detail, this process is as follows.

If $C(S_{j-1},\omega)$ contains both a $C_j$--$R_{j-1}$ path $P$ avoiding $R'_{j-1}$  and a $C_j$--$R'_{j-1}$ path $P'$ avoiding $R_{j-1}$, let $Q$ and $Q'$ be the initial segments of $R_{j-1}$ and $R'_{j-1}$ up to $P$ and $P'$, respectively.
Then let $R_j=R_{j-1}$ and $R'_j=R'_{j-1}$ and 
\[S_j=S_{j-1}\cup V(P)\cup V(P')\cup V(Q)\cup V(Q').\]
This choice of $S_j$ ensures that the rays $R,R'$ constructed form the $R_i$ and $R_i'$ in the limit will not separate each other from $C_j$, because they will satisfy $R\cap S_j= R_j\cap S_j$ and $R'\cap S_j= R'_j\cap S_j$. 

If the ray $R_{j-1}$ separates $C_j$ from $R'_{j-1}$, let $\PF_j$ be a set of three disjoint $C_j$--$R'_{j-1}$ paths avoiding $S_{j-1}$.
All these paths meet $R_{j-1}$.
Let $P_1\in\PF_j$ be the path which $R_{j-1}$ meets first, and $P_3\in\PF_j$ the one it meets last.  
Then $R_{j-1} \cup C_j \cup P_1 \cup P_3$ contains a ray $R_j$ with initial segment $R_{j-1}\cap S_{j-1}$ that meets $C_j$ but is disjoint from the remaining path $P_2\in\PF$ and from $R_{j-1}'$.
Let $R'_{j}=R'_{j-1}$, and let $S_j$ contain $S_{j-1}$ and all vertices of $\bigcup\PF_j$, and the initial segments of $R_{j-1}$ and $R'_{j-1}$ up to their last vertex in $\bigcup \PF$.
Note that $R_j$ meets $C_j$, and that $P_2$ is a $C_j$--$R'_j$ path avoiding $R_j$.

If the ray $R'_{j-1}$ separates $C_j$ from $R_{j-1}$, reverse their roles in the previous  part of the construction.

The edges that lie eventually in $R_i$ or $R'_i$ as $i\to\infty$ form two rays $R$ and $R'$ that clearly hug $D$.

Let us show that there are two disjoint combs, with spines $R$ and $R'$ respectively, and infinitely many disjoint finite circuits in $D$ such that each of the combs has a tooth in each of these circuits.
We build these combs inductively, starting with the rays $R$ and $R'$ and adding teeth one by one.

Let $T_0=R$ and $T'_0=R'$ and $S_0=\emptyset$.
Given $j\ge 1$, assume that $T_i$, $T'_i$ and $S_i$ have been defined for all $i< j$.
By Lemma~\ref{cor:connected_witness} there is a finite set $S\supseteq S_{j-1}$ such that every vertex of $C(S,\omega)$ sends a $3$-fan to $R\cup R'$ in $C(S_{j-1},\omega)$.
As $R$ and $R'$ hug $D$ there is a finite cycle $C$ in $C(S,\omega)$ with edges in~$D$, and which neither of the rays $R$ or $R'$ separates from the other.
By the choice of~$S$, no one vertex of $C(S_{j-1},\omega)$ separates $C$ from $R\cup R'$ in $C(S_{j-1},\omega)$.
Hence by Menger's theorem there are disjoint $(R\cup R')$--$C$ paths $P$ and $Q$ in $C(S_{j-1},\omega)$.
If $P$ starts on $R$ and $Q$ starts on $R'$ (say), let $P':=Q$.
Assume now that $P$ and $Q$ start on the same ray $R$ or $R'$, say on $R$.
Let $Q'$ be a path from $R'$ to $C\cup P\cup Q$ in $C(S_{j-1},\omega)$ that avoids $R$.
As $Q'$ meets at most one of the paths $P$ and $Q$, we may assume it does not meet $P$.
Then $Q'\cup (Q\sm R)$ contains an $R'$--$C$ path $P'$ disjoint from $P$ and $R$.
In either case, let $T_j= T_{j-1}\cup P$, let $T_j'=T'_{j-1}\cup P'$, and let $S_j$ consist of $S_{j-1}$, the vertices in $C\cup P\cup P'$, and the vertices on $R$ and $R'$ up to their last vertex in $C\cup P\cup P'$. 

The unions $T=\bigcup_{i\in\N}T_i$ and $T'=\bigcup_{i\in\N}T'_i$ are disjoint combs that have teeth in infinitely many common disjoint finite cycles whose edges lie in~$D$.
Let $A$ be the vertex set of the component of~$G-T$ containing $T'$, and let $B:= V\sm A$.
Since $T$ is connected, $E(A,B)$ is a bond, and its intersection with $D$ is infinite as every finite cycle that contains a tooth from both these combs meets $E(A,B)$ at least twice.
This contradiction implies that $D$ is finite, as desired.
\qed

\bibliographystyle{plain}
\bibliography{collective}
\end{document}